\newtheorem{lemma}{Lemma}
\newtheorem{theorem}{Theorem}
\newtheorem{proof}{Proof}
\title{Topological structure of optimal flows  on the  Girl's surface}
\author{Maria Loseva and Alexandr Prishlyak}
\begin{document}
\maketitle

\begin{abstract}
We investigate the topological structure of flows on the  Girl's surfaces which is one of two possible  immersions of the projective plane in three-dimensional space with one triple point of the selfintersection.First, we describe the cellular structure of the  Boy's and Girl's surfaces and prove that there are unique images of the project plane in the form of a 2-disc, in which the opposite points of the boundary are identified and this boundary belongs to the preimage  of the 1-skeleton of the surface. Second, we described three structures of flows with one fixed point and no separatrix on the  Girl's surface and proved that there are no other such flows. Third, we proved that Morse-Smale flows and they alone are structurally stable on the Boy's and Girl's surfaces. Fourth, we have found all possible structures of optimal Morse-Smale flows on the Girl's surface. Fifth, we have obtained a classification of Morse-Smale flows on the projective plane, that immersed on the Girl's surface. And finally, we described the components of linear connectivity of sets of these flows.
\end{abstract} \hspace{10pt}

Keywords: Morse-Smale,  topological equivalence, projective plane


\section*{Introduction}

 The Boy's \cite{Boy03} and Girl's \cite{Goodman09, bridges2013:383} surfaces are immersions of the real projective plane in three-dimensional space. They are unique possible immersions  with a single triple point and connected set of self-intersections.  
 
We consider the Boy's and Girl's surfaces as a stratified sets. A Morse-Smale flow is one of the main instrument in the Morse theory on manifolds and stratified sets. 

A smooth flow on a closed manifold is always generated by a smooth vector field, but on a manifold with boundary (a stratified set) vector field have to be tangent to the boundary (every strata). Then
all the concepts that are introduced for dynamical systems (flow) can be applied to vector fields. Let $X_t$ be the flow generated by a vector field $X$.

Two vector fields (flows) $X,Y$ on $M$ are called topologically equivalent if there exists a homeomorphism $h: M\to M$ that sends each trajectory of $X$ onto a trajectory of $Y$ preserving their orientations.

 A flow $f^t$ on a closed manifold is called structurally stable if, for any sufficiently close flow $g^t$, there exists a homeomorphism $h$ sending orbits of the system $g^t$ to orbits of the system $f^t$ ( \cite{Peixoto59}).

 Fixed (singular) points and closed trajectories of a dynamical system are called critical elements.

A singular point $p$ of a vector field $X=\{X_1;\dots X_n\}$ is called non-degenerate or hyperbolic, if in some local coordinates $(x_1;\dots ;x_n)$ at $p $ the Jacobi matrix $\left( \frac{\partial X_i}{\partial x_j} \right) _{i;j=1}^n$ does not contain eigenvalues with zero real part. A closed orbit of $X$ is called hyperbolic whenever the differential of the Poincare map has no eigenvalues of absolute value1at some point $p_2$.

 A stable manifold  $S(p)$ of a criticalel ement $p$ is the following set $S(p) =\{ x\in M: \text{lim}_{t\to \infty }X_t(x) =p\}$. Similarly, the set $U(p) =\{ x\in M: \text{lim}_{t\to - \infty }X_t(x) =p\}$ is called the unstable manifold of $p$.

 A point $p\in M$ is wandering for $X$ if there exists a neighborhood $V$ of $p$ and a number $n >0$ such that $X_t(V)\setminus V= \emptyset$ for any $j> n$. 

A fixed point $y\in M$ is called an $\alpha$-limit(resp. an$\omega$-limit) point for $x\in M$ if there exists a sequence $\{t_i\}$ such that $t_i \to + \infty$ (resp.$t_i \to - \infty$) and $X_{t_i}(x)\to y$.

A flow is a \textit{Morse-Smale} flow if 1) its non-wandering set consists of finitely many singular points and periodic orbits, 2) each of which is hyperbolic, and 3) their stable and unstable manifolds intersect transversally (\cite{Smale60}).

The set of all $\alpha$-limit points (resp.$\omega$-limit points) of $x$ is called $\alpha$-limit ($\omega$-limit) set. Condition 1) of the definition of Morse-Smale dynamical system can be replaced by the following condition 1)' for each point $x\in M$ its $\alpha$-limit and $\omega$-limit sets are contained in the union of critical elements. For vector fields on two-dimensional manifolds there are three types of non-degenerated (hyperbolic) singular points: sinks, sources and saddles. Condition 3) in this case is equivalent to the statement that there is no trajectories whose $\alpha$-limit and $\omega$-limit sets are saddle points.

Morse-Smale flow (vector field) is called Morse flow (vector field) if it does not contain closed trajectories. 

There are a lot of papers on structural classification of Morse-Smale vector fields on surfaces. Most known are  \cite{Fleitas75, Kruglov2018, Leontovich55, Oshemkov98, Peixoto73}.

A Morse-Smale flow without closed orbit is called a Morse flow. We say that flow is optimal if it has the lowest number of fixed points among all flow of that type on the surface.
The Morse flow on the closed surface is optimal if and only if it has only one sink and one source ( \cite{Kibalko18}). Such a flow is also called a polar Morse flow. The topological structure of polar (optimal) Morse flows on closed 2- and 3-manifolds was described in \cite{Giryk96, kad05,
Kibalko18, maks11, Poltavec1995,  prish07ct, prish02top, prish02ms, PrishLos2020}.

For flows on a stratified set, their restriction to each stratum has the same structure as flows on a surface with a boundary. This situation has been investigated in \cite{Loseva2016,  Prus17, PPG2021, prish03sum, Prishlyak2019a, prish03tc, Prishlyak2019}.  Structural stability of Morse-Smale flows was proved for closed manifolds in \cite{ Palis1970, Smale61} and for manifolds with boundary in \cite{Percell_1973, Robinson_1980}.

Morse fields are topologically equivalent to gradient fields of Morse functions \cite{Smale61}. Moreover, the structure of the Morse field coincides with the structure of the Morse function, in which all points of the same index have equal function values. The structure of Morse functions on manifolds of dimensions 2 and 3 is described in \cite{Reeb1946, Kronrod1950, prish00, prish02te,prish02mf, prish03pr, Bolsinov2004, prish08, lychak-prish09, Hladysh-prish16, Hladysh-prish17, HladPrish2019,Hladysh_2019, Hlad-Prish2020}

The formula for the sum of fixed point indices is useful for calculating their number for flow on a stratified set \cite{prish03sum}.

In \cite{Dibeo-prish}, it was described the structure of such optimal flows on the Boy's surface: 1) flows with one fixed point, 2) ms-flows, 3) Morse-Smale projective flows.

The purpose of our article is to describe the structure of such fields on the Girl's surface, as well as to study these flows with respect to homotopy, that is, to describe the linearly connected components of the set of such flows.
The flow invariants that are constructed are graphs embedded in the surface. To encode them, one can use a rotation system as in topological graph theory or a graph with a list of words as in \cite{Prish97}.

Our paper has the following structure. In the first section, we describe the cellular structure of the Boy and Girl surfaces and prove that there are unique representations of the projective plane in the form of a 2-disk, which has opposite boundary points identified and this boundary belongs to the inverse image of the 1-skeleton of the surface.
In the second section, we find three flow structures with one fixed point and no separatrices on the Girl's surface and prove that there are no other such flows.
In the third section, it is proved that Morse-Smale flows and only they are structurally stable on Boy and Girl surfaces.
In the fourth section, all possible structures of optimal Morse-Smale flows on the Girl's surface are described. In the fifth section, we construct a classification of optimal Morse-Smale projective flows on the Girl's surface. Finally, in the last section, for each of the Boy's and Girl's surfaces, we prove formulas relating the numbers of topologically non-equivalent flows, symmetric flows, and non-isotopic flows.





\section{Natural sructures of CW-complexes on the Boy's and Girl's surfaces}

 Here we describe the natural CW-structures of the Boy's and Girl's surfaces, as well as the resulting CW-structures  of the projective plane. In this case, we use a model of the projective plane in the form of a 2-disk, in which opposite points on the boundary are identified (glued). The corresponding CW-structures are planar model of these surfaces. 
 
 \begin{figure}[ht]
\center{\includegraphics[height=5.0cm]{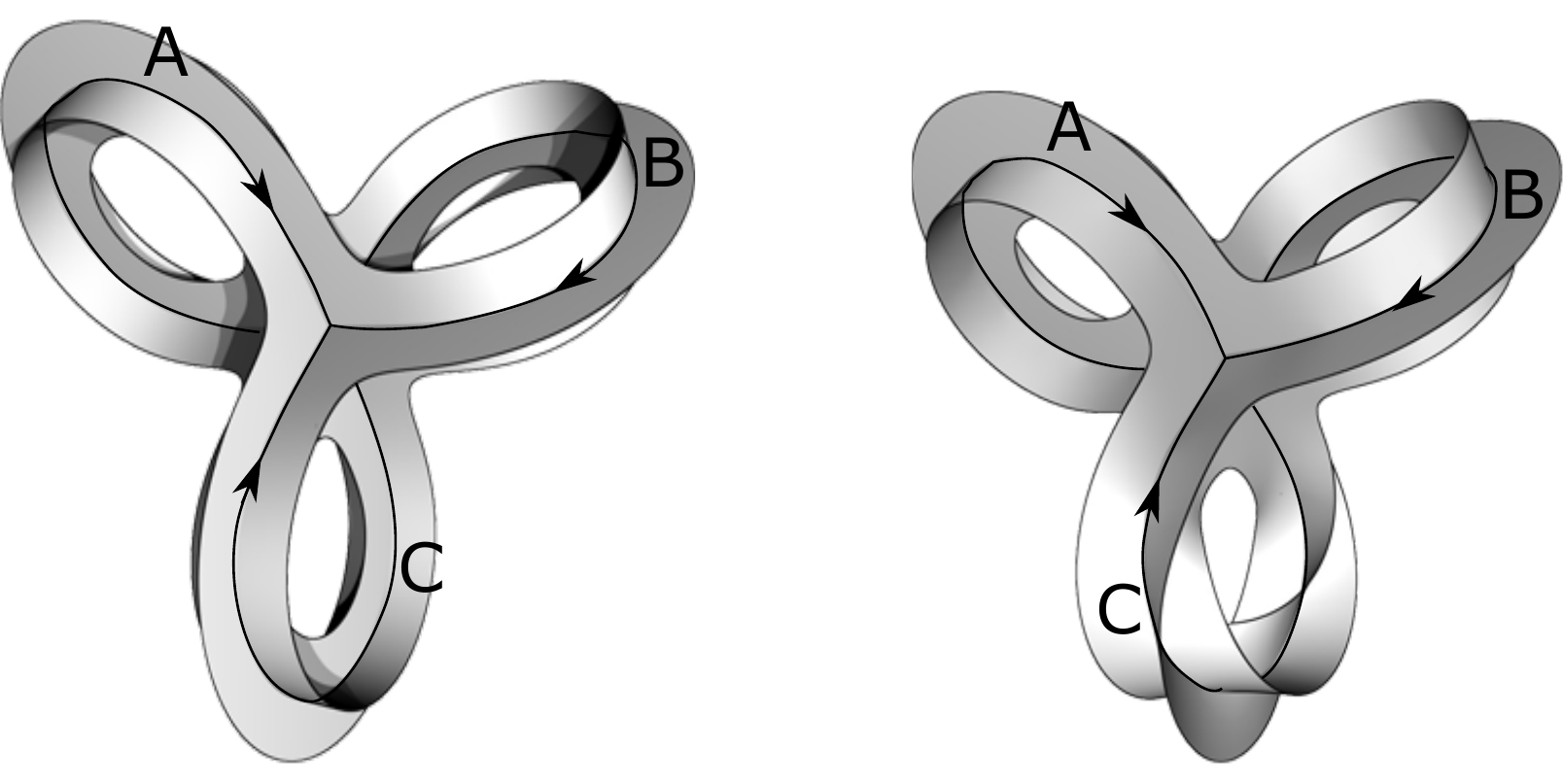}}
\caption{Neighborhoods of the selfintersections on the Boy's (left) and Girl's (right) surfaces}
\label{rp2}
\end{figure}

 On each surface there is one triple dot, which we  call null-point, and three loops formed by double points. Let's denote them by $A, B, C$.  These loops lie at the coordinate angles of different planes and touch the coordinate axes at their common point. We set the direction of movement along them so that the corresponding speed vectors in 0 coincide when moving from C to B, from B to A and from A to C.  The structure of CW-complex for each surface consists of a 0-cell -- null-point, three 1-cell -- $A, B, C$ and four 2-cells. The neighborhoods of the 1-skeleton are shown in Fig. \ref{rp2}. They determine the gluing of 2-cells.

We number the angles in null-point of this neighborhood with integers from 1 to 12 as in Fig. \ref{an}. 

\begin{figure}[ht]
\center{\includegraphics[height=5.0cm]{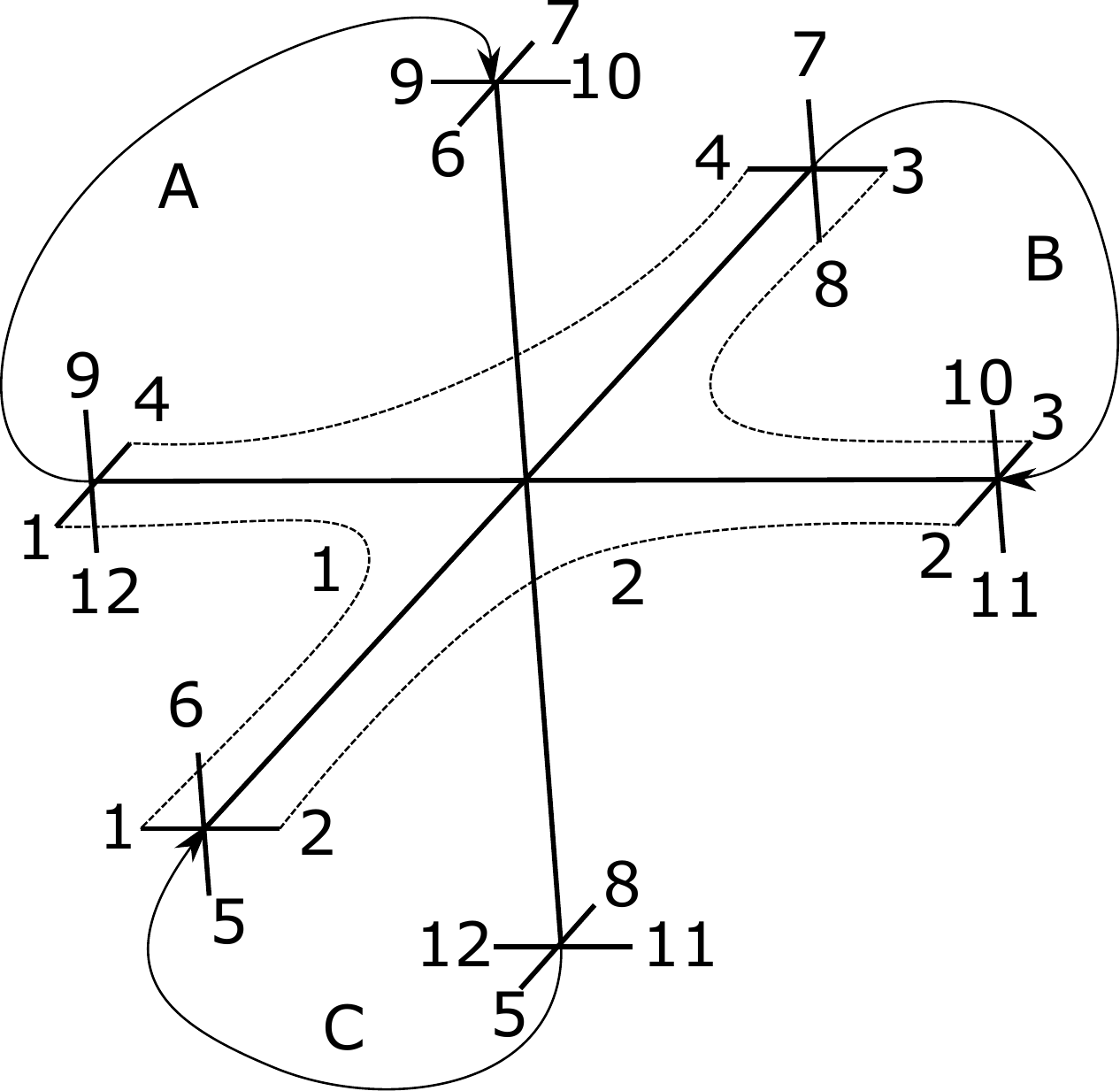}}
\caption{Angle numbering 
}
\label{an}
\end{figure}

The gluing of each 2-cell is determined by the sequence of angles and 1-cells that occur when bypassing its boundary. So in the Boy's surface there are three small 2-cells that lie in the coordinate angles and are limited by 1-cells: $9A'9, 3B'3, 5C'5$.  One large 2-cell has boundary $1A6C'8B11C2B'\-4A7\-B\-10\-A'12C1$.  After gluing three small cells to this nonagon to the hatched sides, we get a hexagon as in Fig. \ref{bg} on the left.

On the Girl's surface, the 1-cells are connected by the 
angles: $A:$ $9-9,1-6,12-10,4-7$, $B:$ $3-3, 7-10, 4-2, 8-11$, $C:$ $5-6, 12-2, 8-5, 11-1$. We get the following boundaries of 2-cells: $9A'9$, $3B'3$, $1A6C'5C'8B11C1$, $2C12A'10B7A4B'2$.  After gluing all sides except $C'$, we get the planar model of the Girl's surface   in Fig. \ref{bg} on the right.

 \begin{figure}[ht]
\center{\includegraphics[height=5.0cm]{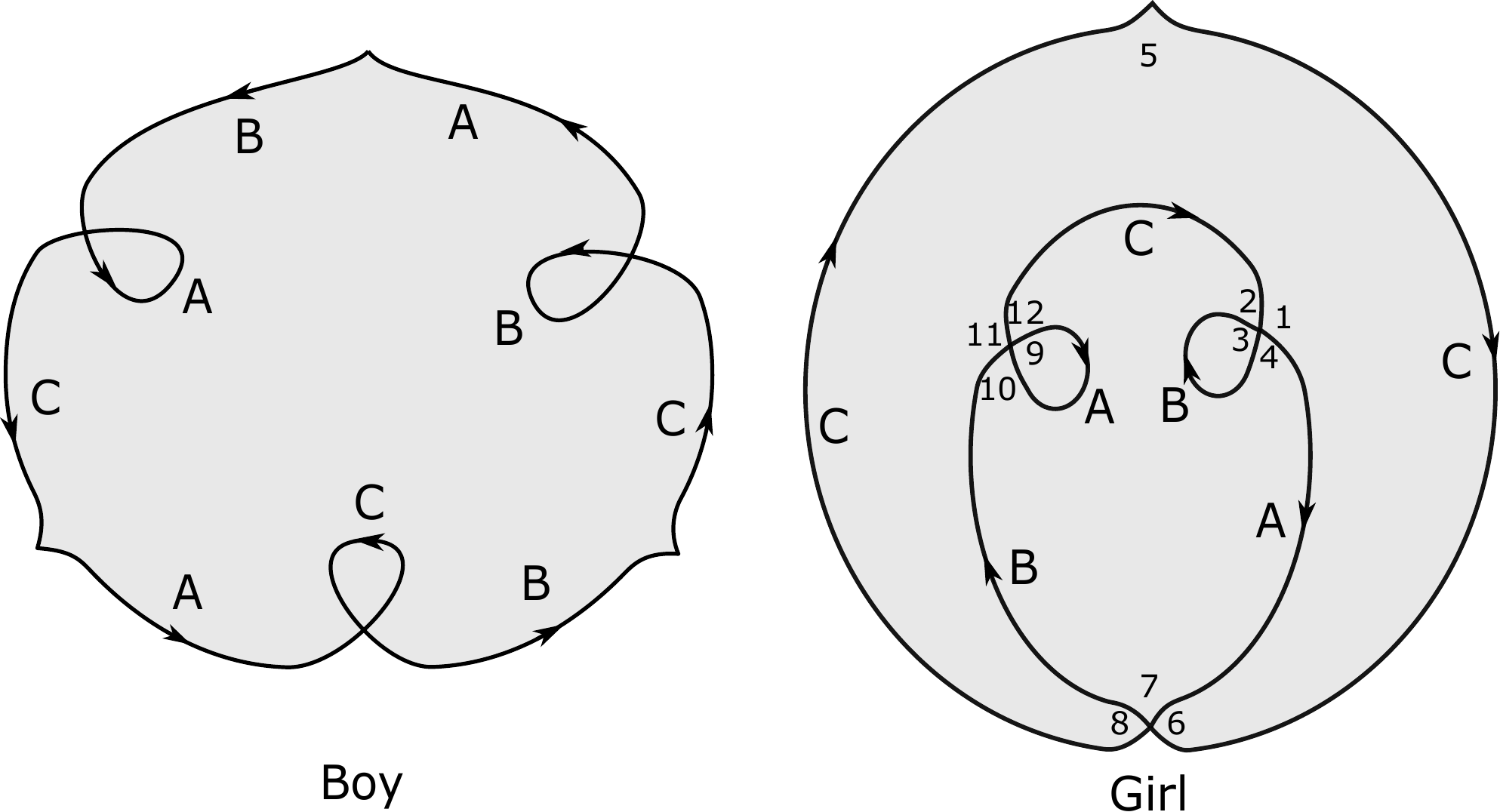}}

\caption{Planar model of the Boy's and Girl's surfaces}

\label{bg}
\end{figure}

\begin{lemma} The Fig. \ref{bg} gives unique (up to homeomorphism) representations of projective plane in the form of the 2-disk with glued opposite  boundary points and such that the boundary is mapped to the 1-skeleton of the Boy`s and Girl`s surfaces, respectively.
\end{lemma}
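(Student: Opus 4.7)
The plan is to translate the lemma into a combinatorial statement about simple closed curves in the graph $\Gamma$ which is the preimage of the 1-skeleton under the immersion $\mathbb{R}P^2\to\mathbb{R}^3$. The graph $\Gamma$ has $3$ vertices (preimages of the triple point) and $6$ edges (two preimages of each of $A,B,C$), with an embedding in $\mathbb{R}P^2$ completely determined by the incidence data already listed for the Boy's and Girl's surfaces. A planar model of the required form (a $2$-disk with antipodal boundary identification) corresponds exactly to a simple closed curve $\gamma\subset\mathbb{R}P^2$ whose complement is an open disk; the condition that the boundary of the disk maps into the $1$-skeleton is precisely $\gamma\subset\Gamma$.

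First I would lift the CW-structures: the image surface has $1$ vertex, $3$ edges, $4$ faces, and this lifts to a CW-decomposition of $\mathbb{R}P^2$ with $3$ vertices, $6$ edges and $4$ faces (consistent with $\chi(\mathbb{R}P^2)=1$). Then I would invoke the standard dichotomy for a simple closed curve on a non-orientable surface: cutting along $\gamma$ yields a single disk if and only if $\gamma$ is one-sided, which in $\mathbb{R}P^2$ amounts to $\gamma$ being non-trivial in $\pi_1(\mathbb{R}P^2)=\mathbb{Z}/2$. Applying $\chi=1$ to the cut space forces $v=e$ for the numbers of vertices and edges of $\gamma$, so $\gamma$ must be a simple cycle in $\Gamma$.

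The main combinatorial step is to enumerate the simple cycles in $\Gamma$ representing the non-trivial class of $\pi_1(\mathbb{R}P^2)$ and to classify them up to self-homeomorphism of the surface. For each candidate cycle I would use the local picture at each vertex, read off from the angle/adjacency tables given before the lemma (for the Boy's surface $A: 9\text{--}9,\ B:3\text{--}3,\ C:5\text{--}5$ and the large $9$-gon $1A6C'8B11C2B'4A7B10A'12C1$; for the Girl's surface the adjacencies $A:\ 9\text{--}9,1\text{--}6,12\text{--}10,4\text{--}7$, etc.), to decide one-sidedness by checking whether a tubular neighborhood of the cycle is a Möbius band. Finally I would exhibit an explicit homeomorphism carrying each admissible cycle to the boundary cycle of the hexagon (Boy's case) or of the polygon (Girl's case) drawn in Fig.~\ref{bg}, exploiting the cyclic $\mathbb{Z}/3$ symmetry that permutes the loops $A,B,C$.

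The hard part, I expect, is the exhaustive enumeration: $\Gamma$ carries $6$ edges on only $3$ vertices, so it supports a large number of simple cycles, and for each one must simultaneously verify one-sidedness and rule out the possibility that the resulting cut space is disconnected or not simply connected. I would organize the enumeration by cycle length (from $2$ up to $6$): short cycles fail either because they are two-sided (and hence cut off a disk, leaving a Möbius band) or because the remaining complement is not a single $2$-cell of the induced subdivision, while the long cycles are rigidly forced, once the orientations along $A,B,C$ fixed at the start of the section are imposed, to coincide up to homeomorphism with the boundary word appearing in Fig.~\ref{bg}.
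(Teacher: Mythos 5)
Your reduction of the lemma to a statement about simple cycles in the lifted graph $\Gamma\subset\mathbb{R}P^2$ (3 vertices, 6 edges, 4 faces, $\chi=1$) is correct, and your one-sidedness criterion (the complement of $\gamma$ is a single open disk iff $\gamma$ is essential, iff a regular neighbourhood of $\gamma$ is a M\"obius band) is exactly the mechanism the paper's own short proof relies on: it observes that any gluing other than the one shown in Fig.~\ref{bg} forces an edge (for the Girl's surface, the lift $C'$) to be identified so that its neighbourhood becomes a M\"obius band, which cannot sit in the interior of a 2-disk. So in outline you and the paper agree.

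The genuine gap is that your proposal stops where the proof has to begin. The lemma is a uniqueness claim over a finite list of combinatorial possibilities, so the ``exhaustive enumeration'' you defer --- listing the simple cycles of $\Gamma$, deciding one-sidedness for each from the angle tables, and matching the survivors to the boundary words of Fig.~\ref{bg} --- is not a technical afterthought but the entire content of the lemma; as written, nothing is actually verified for either surface. There is also a concrete error in the plan: you propose to identify admissible cycles by ``exploiting the cyclic $\mathbb{Z}/3$ symmetry that permutes the loops $A,B,C$,'' but that symmetry exists only for the Boy's surface. The Girl's surface is precisely the immersion that breaks the threefold symmetry (the only symmetry the paper uses for it is the reflection), so on the Girl's surface distinct candidate cycles cannot be conflated by such a symmetry and must be excluded one by one --- which is what the paper's observation about the forced gluing of $C'$ accomplishes in a single stroke.
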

\begin{proof}
If we glue the other sides of the first planar model then we obtain the M\"obius band that cannot be placed on the plane. In the second model, any other gluing leads to gluing of $C'$ and then it neighborhood is homeomorphic to M\"obius band. 
\end{proof}

We introduce next notion for regions on the Girl's surface: 

LD (Left Disk) is a region, which contains 9 ($9A'9$-cell),

RD (Right Disk) contains 3 ($3B'3$-cell),

BR (Boundary Region) contains 5 ($1A6C'5$...-cell),

CR (Central Region) contains 7 ($7A4B'2$...-cell).

CW structures on Boy's and Girl's surfaces form their stratifications, where the stratas correspond to the cells. Also, it set a CW-structures on $S^3$ with two 3-cells and the Boy's (Girl's) surface as 2-skeleton. We consider it in detail in section 6.

\section{Flows with a single fixed point}

By analogy with the Boy`s surface, the null-point is a fixed point for a flow on the Girl's surface \cite{Dibeo-prish}. 1-stratas (1-cells) consist of fixed points and flow trajectories. A flow cannot have closed trajectories or oriented cycles, because if such exist, there will be a second fixed point inside them. So, each trajectory begins and ends at the null-point. Cutting the surface by 1-cells and separatrices, we get regions that can be of two types: 1) elliptical - all its trajectories begin and end in one corner (loops) or 2) polar - trajectories begin and end in different corners.

We don't consider the trajectories the 1-stratas as  separatrices.

Our aim is to find all the flows (up to topological equivalence) without separatrices. Since axial symmetry with respect to the vertical axis is a topological equivalence, each flow with a direction of $C$ from 2 to 12 is equivalent to a flow with a direction of $C$ from 12 to 2. Therefore,  we fix one direction of $C$: from 12 to 2.

By $-A$ and $-B$ we denote orientations that is inverse to a given orientation $A$ and $B$ as on Fig. \ref{bg} right. 
There are 4 possible flows with different orientation of 1 strata $A$ and $B$: 

1) With given orientations $A$ and $B$,

2) With orientations $- A$ and $B$,

3) With orientations $A$ and $- B$, 

4) With orientations $- A$ and $- B$.

In all cases, the LD and RD  are elliptical regions. 

In the first case, the BR has the angle  8 as the source and the angle 6 as a sink and it is therefore polar. However, the CR region has two sources -- angles 4 and 12, as well as two sinks -- angles 2 and 10, so there is at least one separatrix in it, that connect 4 and 12 or 2 and 10. 

 In the second case, in the BR region there is a source angle 8 and a sink 1, in CR -- 7  is a source, and 2 is a sink. So both regions are polar and there is only one structure of such a flow.

 In the third case, 11 is a source, and 6 is a sink in the BR, 12 is a source, 7 is a sink in the CR.  So, there is one flow structure. 

In the fourth case, 11 is a source and 1 is a sink in the BR, 10 is a source, 4 is a sink in the CR.  So, there is one flow structure.

\begin{figure}[ht]
\center{\includegraphics[height=5.2cm]{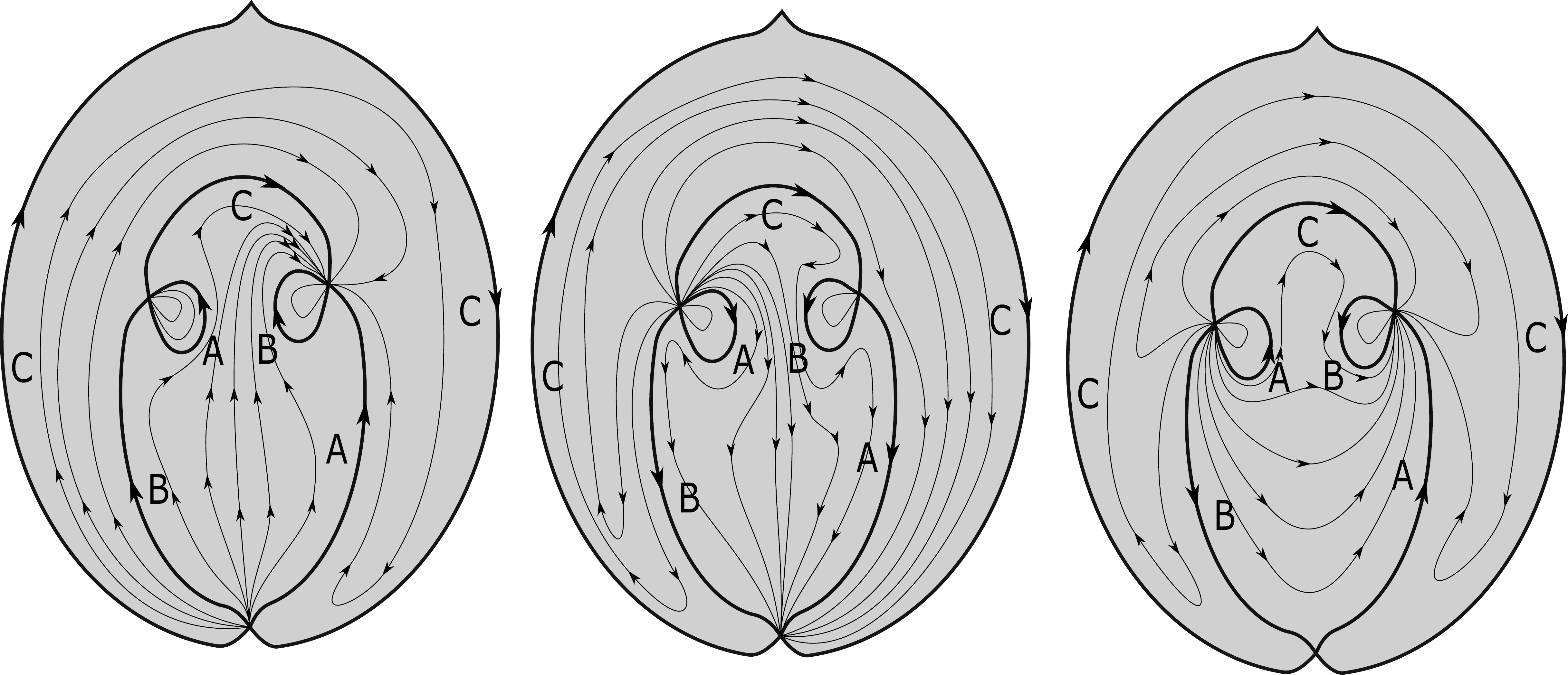}}
\caption{Flows 
with one fixed points
}
\label{gl1}
\end{figure}

To sum up, we have.

\begin{theorem}  There are 3 different flow structures with one fixed point and no internal separatrices  on the Girl's surface (see Fig. \ref{gl1}).
\end{theorem}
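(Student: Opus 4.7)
The plan is to finish the case analysis set up immediately before the statement, treating it as an exhaustive combinatorial enumeration of the admissible boundary data for such a flow. First I would confirm the structural reductions that are needed to bound the search: the triple null-point is a topological invariant, so it is forced to be a fixed point; since we insist on exactly one fixed point, the interior of each of the 1-cells $A$, $B$, $C$ must consist of regular orbits, and because each 1-cell is a flow-invariant loop meeting only at the null-point, it decomposes into two heteroclinic trajectories and hence carries a well-defined orientation. This reduces the classification to a finite combinatorial problem indexed by the three orientations.

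Second, I would invoke the reflection symmetry of the Girl's surface across the vertical axis to fix the orientation of $C$ as going from angle $12$ to angle $2$; this leaves the four orientation choices on $(A,B)$ enumerated in the paragraph preceding the theorem. Third, for each of the four cases I would compute, for each region LD, RD, BR, CR (using the angle-gluing data $A:9\text{-}9,\,1\text{-}6,\,12\text{-}10,\,4\text{-}7$; $B:3\text{-}3,\,7\text{-}10,\,4\text{-}2,\,8\text{-}11$; $C:5\text{-}6,\,12\text{-}2,\,8\text{-}5,\,11\text{-}1$ recorded earlier), which corner angles become local sources (both adjacent oriented 1-cells flow outward) and which become local sinks (both flow inward). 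The regions LD and RD have a single boundary 1-cell forming a loop, so they are automatically elliptic and admit a unique separatrix-free flow up to topological equivalence regardless of the orientations of $A$ and $B$; hence all the combinatorial content sits in BR and CR.

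Fourth, I would appeal to the standard fact that a flow on a topological disk whose boundary carries a prescribed hyperbolic structure with exactly one source-corner and one sink-corner extends, uniquely up to orbit-preserving homeomorphism, to a polar flow with no interior separatrices; conversely if two sources or two sinks appear on the boundary then a separatrix must occur in the interior to separate the basins. Applying this to the computed incidence data, case~1 yields two sources (angles $4,12$) and two sinks (angles $2,10$) in CR, excluding it; cases~2, 3, 4 yield exactly one source and one sink in each of BR and CR, giving one admissible flow structure each, and these three flows are pairwise inequivalent because the cyclic pattern of sources and sinks around the null-point (read off from the oriented 1-skeleton) differs in the three cases. Summing over cases gives exactly $3$ flows, matching Fig.~\ref{gl1}.

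The step I expect to be the main obstacle is the rigorous extension-and-uniqueness claim invoked in the fourth step, rather than the combinatorics; I would handle it by citing the classification of flows on a disk with prescribed hyperbolic boundary data used elsewhere in the paper (e.g.\ \cite{Dibeo-prish, Prishlyak2019a}), so that the proof of the theorem itself reduces to the bookkeeping of boundary sources and sinks in the four enumerated cases.
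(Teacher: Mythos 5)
Your proposal follows essentially the same route as the paper: fix the orientation of $C$ by the reflection symmetry, enumerate the four orientation choices for $A$ and $B$, read off the source/sink corners of BR and CR from the angle-gluing data, discard case~1 because CR acquires two sources ($4,12$) and two sinks ($2,10$) forcing a separatrix, and obtain one polar structure in each of the remaining three cases. The paper's proof is exactly this bookkeeping (it records the specific source/sink angles in each case and treats the disk-extension uniqueness as implicit), so your argument matches it, with your fourth step only making explicit a standard fact the paper leaves unstated.
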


\section{Structural stability of Morse-Smale flows}
To prove structural stability of Morse-Smale flows on the Boy's and Girl's surfaces we need the following lemma: 
\begin{lemma}
 Let the angle n in the neighborhood of null-point is given as $x \ge 0, y \ge 0$ in the corresponding coordinate system $(x,y)$.  Then Morse-Smale vector field is the angle n has a form  $\{x \ f(x,y),\ y \  g(x,y)\}$ where $f(0,0)\ne 0$ and $g(0,0)\ne 0$.
\end{lemma}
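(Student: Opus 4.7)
The plan is to deduce the multiplicative form of $X_1$ and $X_2$ from the fact that the vector field must be tangent to every stratum, and then use hyperbolicity of the null-point to force the leading coefficients to be nonzero.

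First I would identify the strata bounding the angle $n$. In the coordinates in which the sector is $\{x\ge 0, y\ge 0\}$, the two bounding half-rays $\{y=0, x\ge 0\}$ and $\{x=0, y\ge 0\}$ are pieces of the 1-strata (portions of the loops $A$, $B$, or $C$), while the origin is the 0-stratum (the null-point). As recalled in the paragraph on vector fields on stratified sets, on a stratified set the vector field must be tangent to each stratum. This gives the boundary conditions
\begin{equation*}
X_1(0,y)=0 \quad \text{for all } y\ge 0, \qquad X_2(x,0)=0 \quad \text{for all } x\ge 0.
\end{equation*}
Applying the Hadamard lemma (smooth division by a coordinate whose zero set contains the zero set of a smooth function) to each component yields smooth functions $f,g$ with $X_1(x,y)=x\,f(x,y)$ and $X_2(x,y)=y\,g(x,y)$, which is the claimed form.

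Next I would invoke the Morse-Smale hypothesis. By the discussion at the start of Section~2, the null-point is necessarily a fixed point of any flow on the Girl's (or Boy's) surface, and the definition of Morse-Smale recalled in the introduction requires every fixed point to be hyperbolic. Writing $X=(xf,yg)$, the Jacobian at the origin is
\begin{equation*}
\left.\begin{pmatrix}\partial_x(xf) & \partial_y(xf)\\ \partial_x(yg) & \partial_y(yg)\end{pmatrix}\right|_{(0,0)}
=\begin{pmatrix}f(0,0) & 0\\ 0 & g(0,0)\end{pmatrix},
\end{equation*}
whose eigenvalues are precisely $f(0,0)$ and $g(0,0)$. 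Hyperbolicity — no eigenvalue with zero real part — therefore forces $f(0,0)\ne 0$ and $g(0,0)\ne 0$, completing the argument.

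The only subtle point is the passage from the vanishing of $X_1$ on $\{x=0\}$ to the factorization $X_1=x f(x,y)$ with $f$ smooth up to the boundary of the sector; this is the standard smooth division lemma but deserves a sentence of justification, since we are working on a manifold with corners rather than on an open set. Everything else is essentially bookkeeping once the tangency-to-strata condition is written down.
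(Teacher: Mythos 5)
Your argument is correct and follows the same route as the paper's own proof: tangency to the bounding 1-strata forces $X_1$ to vanish on $\{x=0\}$ and $X_2$ on $\{y=0\}$, Hadamard division gives the factorization, and the diagonal Jacobian at the origin turns hyperbolicity into $f(0,0)\ne 0$, $g(0,0)\ne 0$. Your extra remark about justifying smooth division on a corner is a fair point of care that the paper glosses over, but the substance is identical.
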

\begin{proof}
Let the field look like $\{X(x,y),Y(x,y) \}$. Condition for touching the field to the axis $x$  can be recorded as $Y(x,0)=0$ and therefore $Y(x,y)= y \ g(x,y)$. Similarly, $X(x,y)= x \ f(x,y)$ . Then   
$$\frac{\partial X}{\partial x} (0,0)= f(0,0), \ \
\frac{\partial X}{\partial y} (0,0)= 0, \ \ 
\frac{\partial Y}{\partial x} (0,0)= 0, \ \ 
\frac{\partial Y}{\partial y} (0,0)= g(0,0)
$$  and the condition of non-degeneration of the singular point is written as $f(0,0)\ne 0$ and $g(0,0)\ne 0$.
\end{proof}

\begin{theorem}

Morse-Smale flows and only they are structurally stable flows on the Boy's and Girl`s surfaces.
\end{theorem}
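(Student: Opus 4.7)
The plan is to prove the two implications separately, exploiting the stratified structure of the Boy's and Girl's surfaces. Throughout, the vector field is required to be tangent to every stratum, so it restricts to a flow on each 2-cell (viewed as a 2-manifold with boundary formed by the 1-skeleton) and to a flow on each 1-cell.

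For the sufficiency direction (Morse--Smale implies structurally stable), I would first observe that the restriction of a Morse--Smale stratified flow to each open 2-cell is a Morse--Smale flow on a 2-manifold with boundary, so the Percell--Robinson result cited in the introduction gives local structural stability on each such stratum. The nontrivial step is assembling the local conjugacies into a global one, and this is where Lemma~2 is used. In each of the twelve angular sectors at the null-point, the field takes the form $\{xf(x,y), yg(x,y)\}$ with $f(0,0), g(0,0)\neq 0$; this normal form is preserved, up to a $C^{1}$-small error, under any sufficiently small tangent perturbation, and it permits an explicit sector-wise conjugating homeomorphism that fixes the null-point and the two bounding 1-cells. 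Matching these sector conjugacies along each 1-stratum then reduces to comparing the time parametrizations of the two trajectories flowing along the 1-cell, which can be adjusted by a reparametrization because the endpoints are the null-point on both sides.

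For the necessity direction (structurally stable implies Morse--Smale), I would argue by contrapositive, breaking the failure of the Morse--Smale conditions into the standard three cases and constructing a stratified perturbation in each. Case~1: if the non-wandering set is infinite, then since each stratum is compact and there are finitely many strata, this forces either a non-isolated fixed point or a continuum of periodic orbits inside some 2-cell, both of which can be destroyed by a $C^{1}$-small perturbation supported away from the 1-skeleton. Case~2: a non-hyperbolic critical element in the interior of a 2-cell is treated by the classical perturbation; a non-hyperbolic null-point corresponds, by Lemma~2, to $f(0,0)=0$ or $g(0,0)=0$ in some sector, and adding a linear term of the form $(\varepsilon x,0)$ or $(0,\varepsilon y)$ in that sector (extended smoothly and tangentially to adjacent strata via a bump function) produces topologically inequivalent nearby flows because the index pattern around the null-point changes. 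Case~3: a non-transverse intersection of stable and unstable manifolds in a 2-cell is a saddle connection, which is destroyed by a standard Kupka--Smale type perturbation supported in a flow-box, and one checks that the perturbation can be chosen tangent to the boundary strata.

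The main obstacle will be verifying that in all perturbation arguments the modifications can be carried out through stratified vector fields, i.e.\ tangent to every 1-stratum and vanishing at the null-point, and that the resulting conjugating homeomorphisms on adjacent 2-cells genuinely agree along the shared 1-cells. The twelve-angle combinatorics at the null-point (Figure~\ref{an}) must be tracked carefully, since the four 2-cells meet there in a fixed cyclic pattern dictated by the planar model of Figure~\ref{bg}, and the local conjugacy must respect this cyclic ordering. Once this bookkeeping is in place, the two directions combine to give the stated characterization.
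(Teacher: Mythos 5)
Your proposal follows essentially the same strategy as the paper: away from the null-point the problem is reduced to structural stability of Morse--Smale flows on surfaces with boundary (the paper cites Palis, you cite Percell--Robinson), and at the null-point both directions are handled via the sector normal form $\{xf(x,y),\,yg(x,y)\}$ of Lemma~2 — robustness of $f(0,0)\neq 0$, $g(0,0)\neq 0$ for sufficiency, and an explicit destabilizing perturbation when one of them vanishes for necessity. The only notable difference is that the paper's instability argument splits off a second fixed point $(c,0)$ on the 1-stratum via an explicit family, whereas you argue via a sign change of $f(0,0)$; both mechanisms work.
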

\begin{proof}
It is necessary to show that there doesn't exist a small bifurcation of Morse-Smale flow and that it exist  for other flows. If bifurcation does not occur at an angular point (0-stratum), then it is the same as on a surface with a boundary. Therefore, we can take advantage of the structural stability of Morse-Smale flows on a surface with a boundary \cite{Palis68}.  Consider a flow in a angle  with a vertex in the 0-strata. For any flow in the angle we have a representation a form  $\{x \ f(x,y),\ y \  g(x,y)\}$  as int the Lemma. With a small change of  the flow, the condition $f(0,0)\ne 0$ and $g(0,0)\ne 0$  does not change. Then the Morse-Smale flows are structural stable in 0-stratum. Using the family of flows $\{x \ (c+ f(x,y)),\ y \ (c+g(x,y))\}$, with small changes to the  parameter $c$, any flow can be brought in the neighborhood of 0 to the Morse-Smale flow. It remains to prove that there are no other flows, with the same structure as Morse-Smale flows and without bifurcations. Let the flow  $\{x \ f(x,y),\ y \  g(x,y)\}$ has $f(0,0)=0$  and $(0,0)$ is an isolated fixed point like for the Morse-Smale flows. Consider the family $$\{x \ (c^2-cx + f(x,y)-f(c,y)),\ y \ g(x,y)\},\ c\in [0,1]$$. Then with any $c>0$ the resulting flow has 2 fixed points $(0,0)$ and $c,0$. Therefore, it isn't topologically equivalent to the original flow (at $c=0$). If $g(0,0)=0$, then the bifurcation formula is similar: 
$$\{x \ f(x,y), y\ (c^2-c y + g(x,y)-f(x,c))\},\ c\in [0,1]$$.
\end{proof}

\section{Optimal Morse-Smale flows}

As with the Boy's surface \cite{Dibeo-prish}, each Morse-Smale flow has at least one fixed on every 1-cell. We call fixed point by a potential source (sink) if it is $\alpha$-limit ($\omega$-limit) point  for two trajectories belonging 1-cell.
We indicate potential sink in red and the letter R.  Potential sources are green (G). Separatrices have the same color as it limit points.

\begin{theorem}
On the Girl's surface, optimal Morse-Smale flows have 4 fixed points: 0-cell and by one point on each 1-cell. In total, there are 534 non-homeomorphic and 1058 non-homotopic optimal Morse-Smale flows.

\end{theorem}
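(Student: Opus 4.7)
The plan is to first establish that 4 is the minimum number of fixed points for a Morse-Smale flow on the Girl's surface, then enumerate all optimal configurations by analysing each 2-cell, and finally reconcile the homeomorphism and homotopy counts via the axial symmetry used already in Theorem~1.

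For the lower bound, the null-point is forced to be fixed because the field is tangent to every stratum. Each loop $A$, $B$, $C$ is flow-invariant, and its restriction is a flow on an $S^1$ through the null-point; Morse-Smale regularity on the circle forces singular points to alternate between sources and sinks, and the 12-sector local structure at the null-point (Fig.~\ref{an}) cannot satisfy this alternation on all three loops simultaneously. Hence each 1-cell needs at least one further hyperbolic fixed point, giving $4$ in total, and the index budget from the stratified Poincar\'e--Hopf formula \cite{prish03sum} fits. Since $4$ is realised (configurations will be exhibited during the enumeration), this is the optimum.

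With exactly one extra fixed point on each 1-cell, the four 2-cells LD, RD, BR, CR contain no interior singularities, so each flow is determined by (i) the orientations of $A,B,C$, (ii) the $R/G$ (potential sink/source) label of each 1-cell fixed point, and (iii) the separatrix diagram inside each 2-cell compatible with the boundary data. On the two elliptic disks $9A'9$ and $3B'3$ the pattern is essentially unique once the boundary data are given; on the eight-cornered polygons $1A6C'5C'8B11C1$ and $2C12A'10B7A4B'2$ one must list all Morse-Smale separatrix skeletons for each boundary assignment. Fixing one orientation of $C$ as in Theorem~1 and iterating over the remaining $2^{2}\cdot 2^{3}$ boundary configurations, I would combine region-wise catalogues to obtain the total number $T$ of configurations. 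The expected identification is $T = 1058$, matching the stated homotopy count.

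To pass from $T$ to the homeomorphism count, I would quotient by the axial reflection that reverses the orientation of $C$. Because this reflection is a self-homeomorphism of the Girl's surface not isotopic to the identity, two configurations lying in a single reflection orbit are topologically equivalent but distinct in the Morse-Smale path space. By Burnside's lemma the number of non-homeomorphic flows is $(T+s)/2$, where $s$ counts reflection-symmetric configurations; the identity $(1058 + s)/2 = 534$ forces $s = 10$, and I would conclude by exhibiting these ten self-symmetric flows explicitly as a consistency check. The principal obstacle is the enumeration itself: the product of boundary data and separatrix skeletons on the two large polygons is sizeable, and one must carefully rule out configurations violating Morse-Smale transversality while avoiding double-counting mere relabellings of sectors. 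Pinpointing the exact value $s = 10$ of self-symmetric flows is the delicate arithmetic that locks in the pair $(534,\ 1058)$ rather than a nearby pair.
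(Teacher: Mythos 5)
Your overall architecture (fix the null-point, put one extra fixed point on each 1-cell, enumerate separatrix diagrams region by region over the possible boundary data, then relate the homeomorphism and homotopy counts through the axial reflection) is the same as the paper's, and your Burnside relation $n=(T+s)/2$ is algebraically equivalent to the paper's $m=2n-n_s$. But the proposal has a genuine gap: the entire combinatorial core of the theorem is never carried out. You write that you ``would combine region-wise catalogues'' and that ``the expected identification is $T=1058$''; the numbers $1058$, $534$ and $s=10$ are taken from the statement and checked for mutual consistency rather than derived. The paper's proof consists precisely of this enumeration: it observes that the gluings at the null-point force the three fixed points $c,d,g$ to share one colour and likewise $b,f$ and $a,e$, which collapses the boundary data to three essentially different colourings; for each it counts the separatrix configurations in $BR$ and $CR$ separately (getting $14\times12=168$, then $86$ and $13$ via the symmetric/non-symmetric bookkeeping $n=b_sc_s+b_sc_n+b_nc_s+2b_nc_n$), sums to $267$, and doubles to $534$. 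Your plan to iterate over $2^2\cdot2^3$ independent choices of orientations and labels misses exactly this identification constraint, so it would enumerate inconsistent configurations and double-count: the orientations of the arcs of a 1-cell are not free once the colour of its interior fixed point and the local behaviour at the null-point are fixed.

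Two smaller points. First, your lower-bound argument that each 1-cell circle needs a second fixed point assumes the null-point is a genuine source or sink for the restricted circle flow, which is not automatic from Lemma~2 (the two branches of a loop at the null-point lie in different angles and their radial derivatives need not have the same sign); the paper simply imports this fact from the Boy's-surface paper rather than reproving it, so a careful justification would actually be an improvement, but yours as stated is not yet one. Second, the claim that reflection-related but non-symmetric flows are topologically equivalent yet non-homotopic is exactly the content of the paper's Lemma~3 and Theorem~5 (via structural stability and the deformation of orientation-preserving homeomorphisms of the disk to the identity); you assert it without argument. Neither of these is fatal to the strategy, but together with the missing enumeration they mean the proposal is a plan for a proof rather than a proof.
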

\begin{proof}

Since $c, d$  and $g$ are glued together at the same point, they have the same color. Let it be green. Similarly, points $b$ and $f$ are painted with one color, as well as one color of points $a$ and $e$. Therefore, three options are possible: 1) $a$ is red and $b$ is green, 2) $a$ and $b$ are red, 3) $a$ and $b$ are green.

\begin{figure}[ht]
\center{ \includegraphics[height=5.0cm]{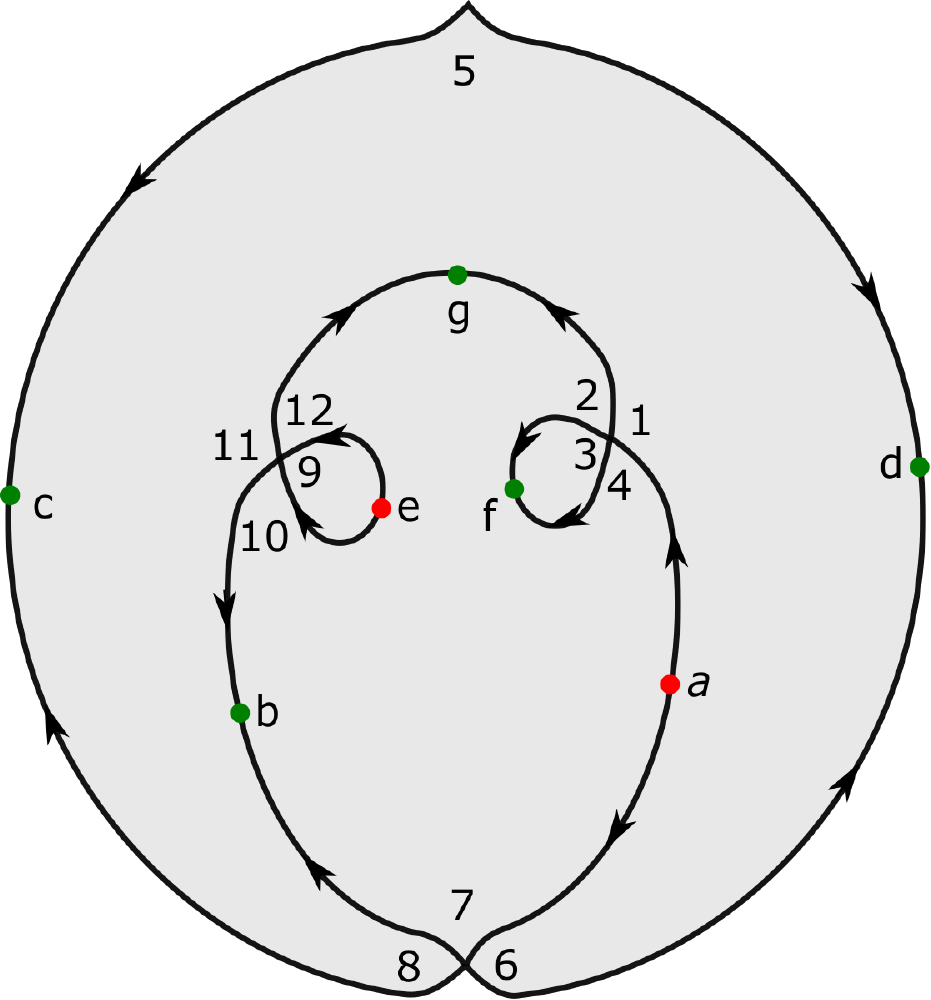} }
\caption{Morse-Smale flows, option 1)}
\label{bg3a}
\end{figure}

In  \textit{the first option} ($a$ is red and $b$ is green, see Figure \ref{bg3a}),t to calculate the total number of flow structures  $n$, find the number of flow structures $n_b$ in $BR$ and the number of flow structures  $n_c$ in $CR$.  Then $n=n_b \times n_c$.  
If there are no red separators in $BR$, then any of the points $b,c,d,g$ can be a source. So we have 4 different flow structures in $BR$ in this case. If there is a red separatrix, it is alone and ends in $a$.  If it starts at angle $8$, then it splits $BR$ into two regions in each of which have two green points: $b,g$ for the inner region and $c,d$ for the outer region. Thus, in each of these areas there can be two sinks and the total number of flows with a separatrix  $8\to a$ is $2\times 2 = 4$. Separatrices $11\to a$ and $5\to a$ split $BR$ into two region, one with a green point and the other with three green points. Then, the total number of options for each of them is $1 \times 3 = 3$. Adding everything together, we get that $n_b= 4 + 4 + 3 + 3 = 14$.

If there are no red separators in  $CR$, then the three green points on the boundary give 3 options. If there is one separatrist $2\to e$,$2\to a$, $a\to e$ or $e\to a$, then it splits $CR$ into two regions with one and two green points. Thus, we have $4\times 2= 8$ variants of flows with one separatrix. If there are two red separatrices in $CR$ ($2\to a$, $2\to e$), then only one such flow is possible. In total, we have $n_c=3+8+1=12$.  The total number of non-homeomorphic flows for the first diagram is $n= 14\times 12=168$.

In the second and third options, it is necessary to take into account cases that are symmetric with respect to the vertical axis of symmetry. We denote by bs the number of symmetric flows in $BR$ and by $b_n$ -- not symmetrical ones.  In $CR$, we denote corresponden number by $c_s$ and $c_n$, respectively. Then the total number of flow structures can be calculated by the formula $$n = b_s \times c_s + b_s \times c_n + b_n \times c_s + 2 b_n \times c_n.$$

\begin{figure}[ht]
\center{ 
\includegraphics[height=5.0cm]{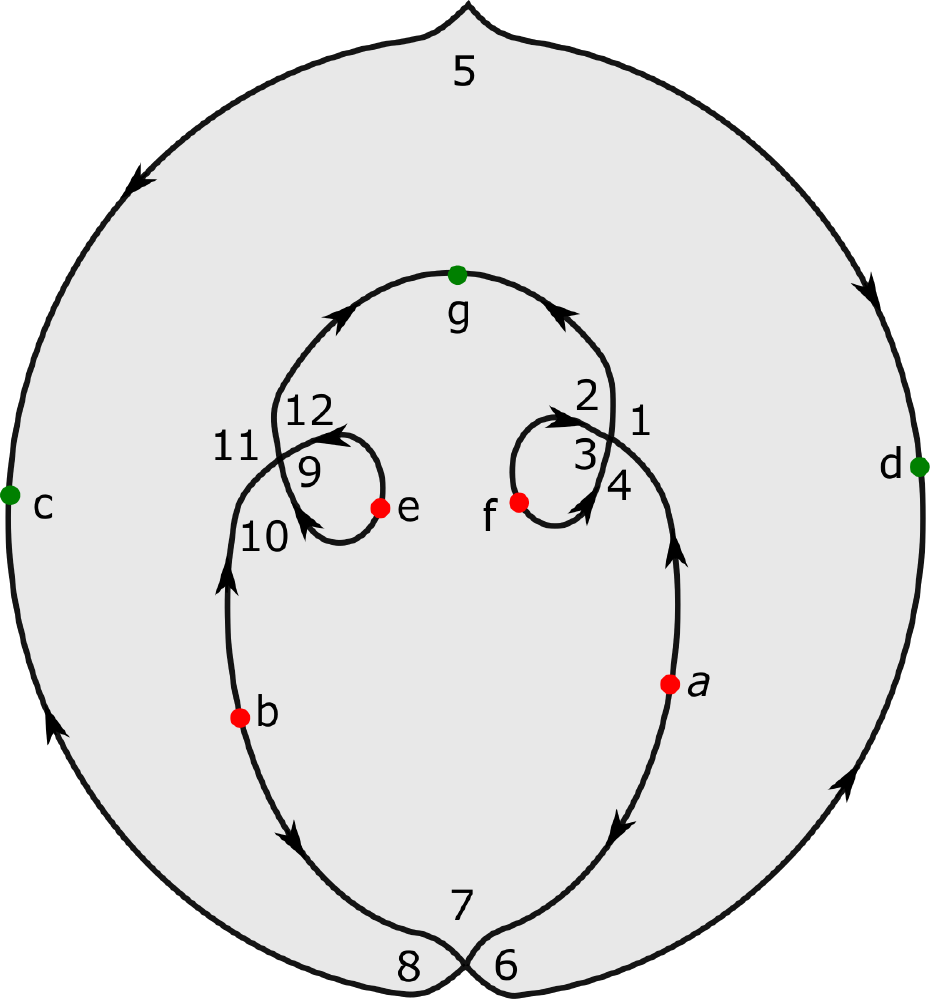}  \ \ \ \ 
\includegraphics[height=5.0cm]{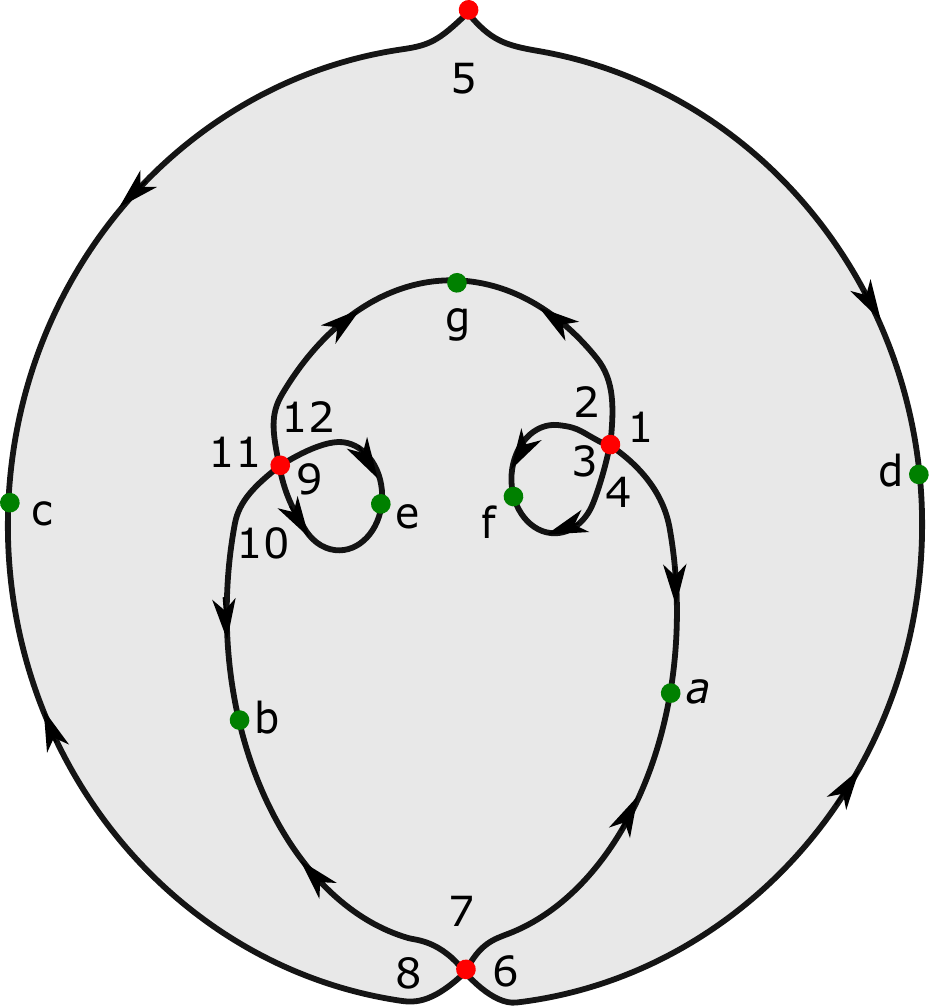}}
\caption{Morse-Smale flows, options 2) and 3)}
\label{bg3b}
\end{figure}

Consider  \textit{the second option} ( $a$ and $b$ are red, see Figure \ref{bg3b}, left). If there are no red separatrix  in $BR$, then the version with a sink  in $g$ is symmetrical, and versions with a sink in $c$ or $d$  are not symmetrical. The last two version are homeomorphic to each other. If there is only one red separator, it may end in $a$ or $b$. These cases are symmetrical (homeomorphic), so we consider only the first of them. Then there are two case of separatrix $5\to a$ or $b\to a$, which divide $BR$ into two regions with one and two possible options for a sink. So, we have 4 non-symmetrical flows structures. If there are two red separators in $BR$ ($5\to a$, $5\to b$), then the such flow is unique and symmetrical. Summing up, we get $b_s=2, b_n=5$.

In $CR$, the symmetric flow must contain the green separatrix $g\to 7$.  Then the sources can be in 1) $a$ and $b$ or 2) $e$ and $f$. Thus, $c_s=2$.  Another option for arranging sources (e.g., $a,e$) for this saparatrix is non-symmetrical. For the separatrix $g\to 4$ we have three versions of the sources $(a,b,e)$, and if there are no separators, then two non-homeomorphic versions of the source $(b,e)$.  Summing up, we get $c_n=6$. 

Then the total number of non-homeomorphic threads for the second options is $n=2\times 2+2\times 6+5\times2+ 2\times 5\times 6=86$.

For \textit{the third option}  ($a$ and $b$ are green, see Figure \ref{bg3b}, right), symmetric flows for each of the two regions should have a sink in $g$, and for non-symmetric ones, there are two non-homeomorphic possibilities for the location of the sink. Therefore, $b_s=1$, $b_n=2$, $c_s=1$,$c_n=2$.  So, $n=1\times 1+2\times 1+1\times 2+2\times 2\times 2=13$.

Summing up all three options, we have $168 + 86 + 13 = 267$ flows with green $c, d$ and the same number with red, that is, only $534$ non-homeomorphic ms-flows. 
\end{proof}

\section{Projective Morse-Smale  flows}

\begin{theorem}
An optimal projective Morse-Smale (OPMS) flow of the Girl's surface has three sources, three sinks and five saddles on the projective plane.

There are 118 non-homeomorphic OPMS flows and 230 non-isotopic OPMS flow on the Girl's surface. 

\end{theorem}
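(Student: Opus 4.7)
The plan has two parts: first establish the $(3,3,5)$ distribution of critical points on the projective plane, then enumerate OPMS flows by restricting the classification of the preceding theorem to configurations that admit a consistent lift to $\mathbb{R}P^2$.

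For the critical-point count, I use the CW-structure inherited on $\mathbb{R}P^2$ from the Girl immersion: $3$ vertices (preimages of the null-point), $6$ edges (two preimage arcs for each of $A,B,C$), and $4$ faces, giving $\chi(\mathbb{R}P^2)=3-6+4=1$. A projective Morse-Smale flow keeps this $1$-skeleton invariant, so each vertex and at least one interior point of each edge is fixed. Each vertex is degree $4$; an analysis of the local flow around each preimage of the null-point (the $4$ adjacent sectors grouped out of the $12$ angles enumerated in Section 1) shows that under minimality the local type is a saddle. The single interior fixed point on each $1$-cell of the Girl's surface lifts to $2$ fixed points on $\mathbb{R}P^2$, giving $6$ edge-interior critical points, which split as $3$ sources and $3$ sinks. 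Finally, the Morse-Smale condition on the two large (pentagonal) faces combined with $\chi(\mathbb{R}P^2)=1$ forces exactly two additional saddles in the interior of those faces, for a total of $3$ vertex-saddles $+$ $3$ edge-sources $+$ $3$ edge-sinks $+$ $2$ face-saddles $=$ $3+3+5=11$, with index sum $3+3-5=1=\chi(\mathbb{R}P^2)$.

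For the flow count, I would restrict the case analysis of the previous theorem to projective flows. Projectivity is a local compatibility condition on each $1$-cell: the flows on its two preimage arcs must agree under the antipodal identification of the planar model of Fig.~\ref{bg} (right). I would recompute $b_s, b_n$ in BR and $c_s, c_n$ in CR for the projectively-liftable subfamilies in each of the three options for the colors of $a,b$, using the appropriate counting formula per option (product for option 1, the symmetrized sum $n=b_sc_s+b_sc_n+b_nc_s+2b_nc_n$ for options 2 and 3). Summing the contributions across the three options and doubling for the dual coloring of $c,d,g$ yields $118$ non-homeomorphic OPMS flows. Identifying the $6$ axially-symmetric structures among these then produces $2\cdot 118-6=230$ non-isotopic OPMS flows.

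The principal obstacle is translating the projectivity condition into a precise combinatorial restriction on the separatrix and color patterns of the previous classification, and then checking that in every surviving case the two interior saddles on the pentagonal faces can be placed (and are uniquely placed) to produce a valid Morse-Smale flow on $\mathbb{R}P^2$. This requires careful local compatibility checks at each vertex preimage, since the separatrices of the face saddles must connect correctly to the source/sink structure on the preimage edges while respecting the forced saddle type at each vertex; the bookkeeping of which $(b_s,b_n,c_s,c_n)$ contributions survive in each option is where the bulk of the work lies.
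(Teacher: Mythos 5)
Your critical-point count rests on a local claim that fails. At a generic triple point the immersion looks locally like three planes in general position, so the three preimages of the null-point sit on three sheets that pairwise share the local branches of the double curves; by Lemma~2 the field in each angle has the form $\{x\,f,\ y\,g\}$, so the type of each sheet-vertex is determined by the signs of the field along the two branches lying in that sheet. Making all three sheet-vertices saddles would require the three branch-signs to be pairwise distinct, which is impossible for two signs on three branches. Accordingly, the paper's configuration is the opposite of yours: the three vertex preimages are \emph{all sources} (or, in the dual half of the count, all sinks), the three sinks are distributed among the edge points $e,f$ and a third point on or inside $BR$, and the five saddles are the remaining edge points together with one or two saddles in the interiors of $BR$ and $CR$. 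Your distribution (three vertex-saddles, three edge-sources, three edge-sinks, two face-saddles) has the correct index sum $1=\chi(\mathbb{R}P^2)$ but is not realizable, and the sentence ``an analysis \dots shows that under minimality the local type is a saddle'' is precisely the step that is false.

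The enumeration strategy has an independent flaw: an OPMS flow has eleven fixed points on $\mathbb{R}P^2$, and in every case at least one of them (an interior sink in $BR$ in case (a), otherwise an interior saddle in $BR$ or $CR$) lies in the interior of a 2-cell, hence projects to a fifth fixed point on the Girl's surface. So OPMS flows are \emph{not} lifts of the 534 optimal Morse--Smale flows of the preceding theorem, and no ``projectivity filter'' applied to that classification can produce them; the $b_s,b_n,c_s,c_n$ bookkeeping you propose to reuse does not apply here. The paper instead runs a fresh case analysis directly on the planar model of Fig.~\ref{bg}: it fixes the three vertex preimages as sources, splits into three options according to whether $e$ and $f$ are sinks or saddles, enumerates the admissible positions of the third sink and of the saddle separatrices in $BR$ and $CR$ (obtaining $38+19+2=59$ structures, three of them symmetric), and doubles for the source/sink duality to reach $118$ and $2\cdot 118-6=230$. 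Your reverse-engineered $n_s=6$ happens to be right, but neither the $(3,3,5)$ distribution nor the count of $118$ is reachable by the route you describe.
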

\begin{proof}

Consider  the first case where 0-cell is the source for all three corresponding vertices on the projective plane. Three options are possible for $e$ and $f$: 1) they are both sinks, 2) $e$ is a sink, $f$ is a saddle, and 3) they are both saddles.

 \begin{figure}[ht]
\center{\includegraphics[height=5.0cm]{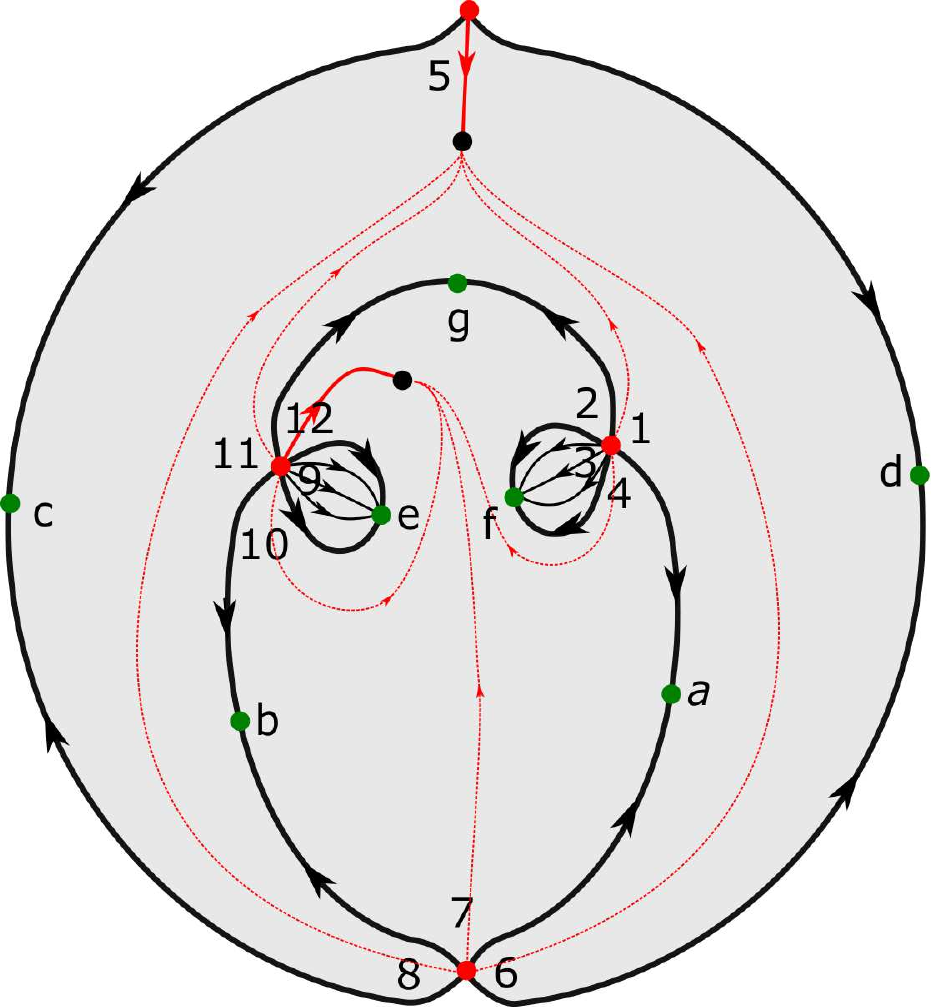} \ \ \ \ 
%
\includegraphics[height=5.0cm]{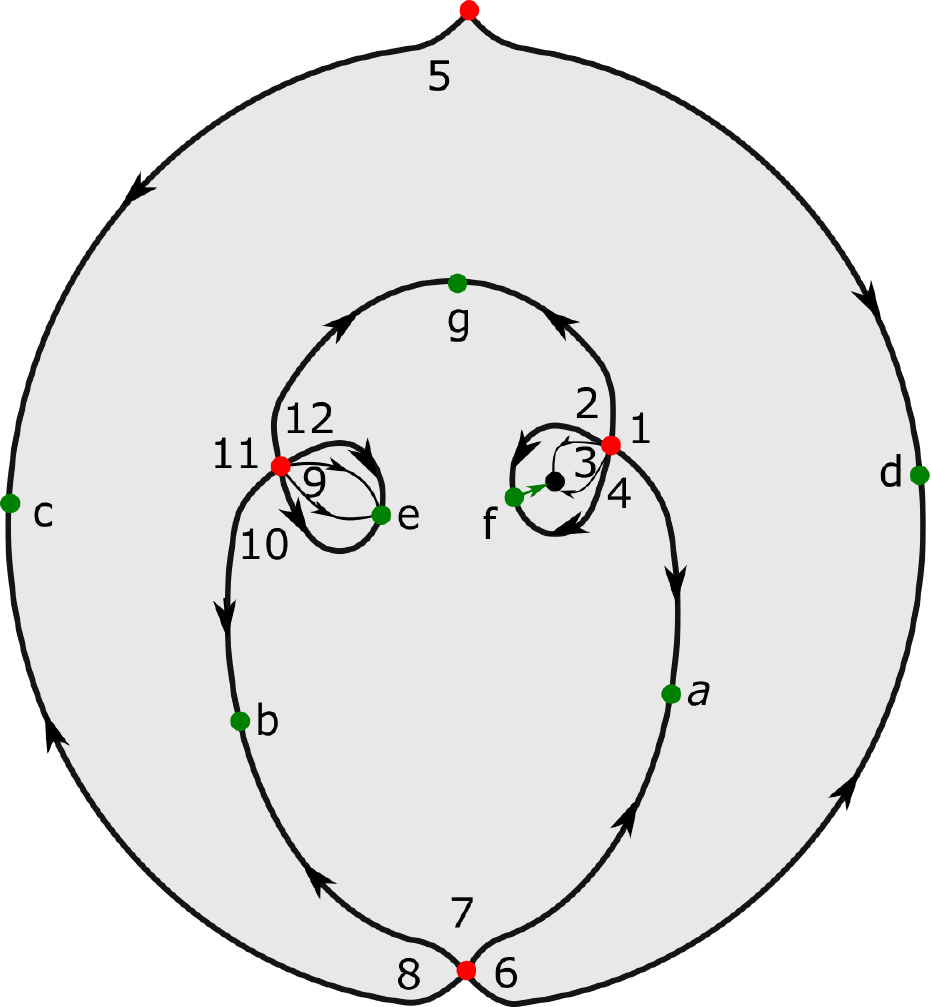}}
\caption{Projective Morse-Smale flows, options 1),2)}
\label{pms2}
\end{figure}

Consider \textit{the first option} (Fig.\ref{pms2}, left ). Since two sinks are known ($e$ and $f$), the third sink lies within or at the boundary of the $BR$.  In the case (a) of the internal sink, we have one symmetrical structure on $BR$.  The other two sinks must be separated by a stable stable manifold in the $CR$.  Let one of his separatrices come out of 12, then for the other there are three options: 4, 7, 10. The options when the first of the separators comes out of 2 are homeomorphic to those considered. So there are three non-homeomorphic flow structures in this case. 

Case (b): $c=d$ are sinks. Since they must be separated by a stable manifold of saddle in $BR$, one of the separatorix included in it begins at 5, and for the other there are 4 options. There are three options for $CR$, as in the previous case. So, for the case of b) we have $3\times 4=12$ non-homeomorphic flows.

Case (c): $g$ is sink. In this case, one separatrix begins at 12 and the other at 2. Such non-homomorphic pairs are possible, specifying two saddles inside the $CR$: 1) 2--4, 10--12 (symmetrical flow), 2) 2--7, 7--12 (symmetrical flow), 3) 2--4, 7--12, 4) 2--4, 4--12, 5) 2--12, 2--4, 6) 2--12, 2--7, 7) 2--12, 2--10. So we have 7 flow structures.

Case (d): $a$ is a sink. The following options are possible: 1 -- 6) 4--7, 2 or 12 -- 4, 7 or 10 ; 7 -- 10) 4--10, 4 or 10 -- 2 or 12, 11) 4--12, 7--12, 12) 4--12, 10--12, 13 -- 16) 2--4, 2 or 12 -- 7 or 10. We have 16 options. Total, there are $3+12+7+16=38$ flow structures (2 of them are symmetrical).

Consider \textit{the second option} $e$ is a sink, f is a saddle (Fig.\ref{pms2}, right).  The situation is similar to the first option. We consider the cases where a sink in BR is? There are 5 cases: a) internal sink -- one flow structures, b) $c = d$ are sinks -- 4 flow structures (inner saddle in BR, one separtrix enters it from 5, and for the second point there are 4 options), c) $g$ is a sink -- 4 flow sructures (inner saddle in $CR$, one separtrix enters it from 12, and for the second point there are 4 options) d) $b$ is a sink, 4 cases (10 -- 2, 4, 7 or 12), e) $a$ is a sink, 2x3=6 cases (7 or 10 -- 2, 4 or 12). Total, there is $1+4+4+4+6=19$ flow structures.

 \begin{figure}[ht]
\center{\includegraphics[height=5.0cm]{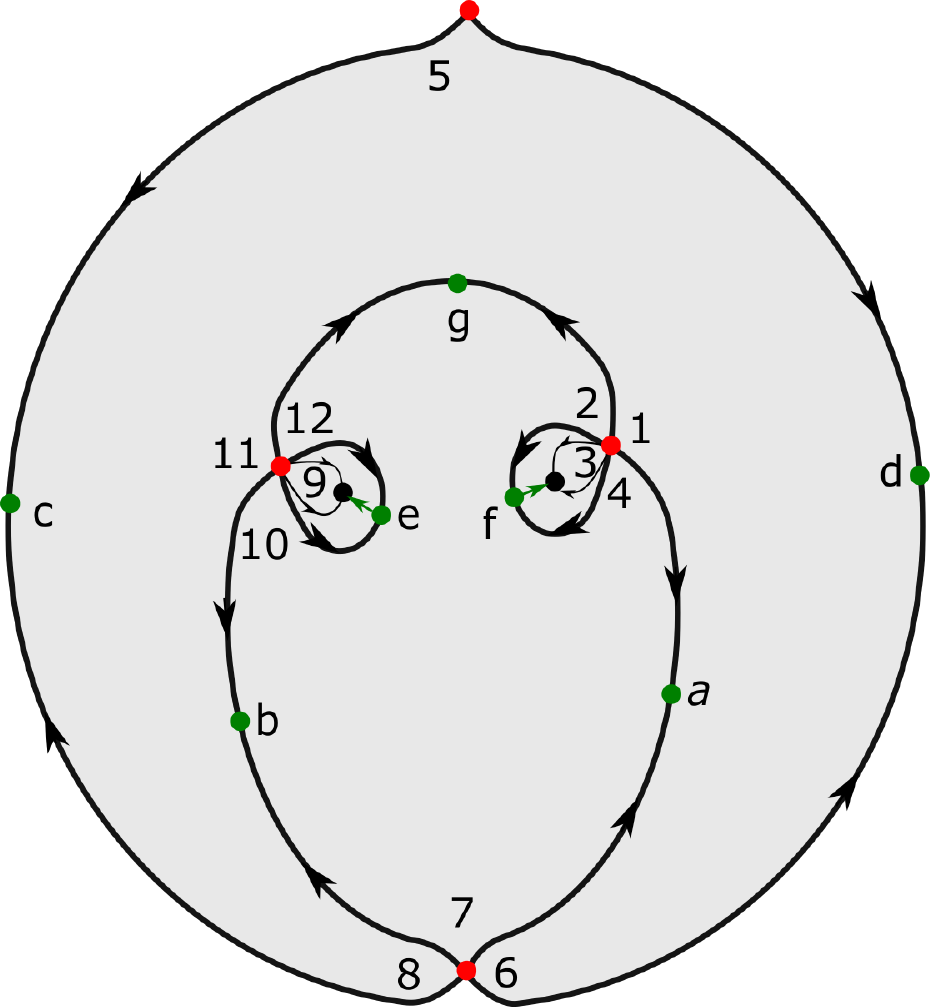}}
\caption{Projective Morse-Smale flows, option 3) }
\label{pms3}
\end{figure} 

In \textit{the third option} ($e,f$ are saddles, Fig. \ref{pms3}) the sink lie on the boundary of BR and CR.  There are two possible cases of the location of the sink: points $a$ and $g$.  In each case, the flow is unique  in each of the region. Therefore, there are 2 flow structures (the first of which is symmetrical).

Summing up, there are $ 2 \times (38 + 19 + 2) = 118$ the  projective non-homeomorphic flows. 
\end{proof}
 
\section{Homotopy of flows}
Let $V$ is a subset of the space of all smooth vector fields (flows) on $M$. A homotopy of $X \in V$ is a continuous map $h: [0,1] \to V$ such that $h(0)=X$. Then we say that $X$ is homotopic to $h(1)$ in $V$. 

If there is an isotopy $H_t$ of surface $M$ then it set homotopy $H_t^*(X)$ of $X$, $t\in[0,1]$.

\begin{lemma}
If $V$ is a sat of Morse-Smale or projective Morse-Smale vector fields and vector field $X$ is homotopic to $Y$ in $V$ then there is a continuous deformation $H: M \times [0,1] \to M, H_t(x)=H(x,t)$ such that $H_0$ is an identity mapping and $H_1$ is a topological equivalence of $X$ to $Y$.
\end{lemma}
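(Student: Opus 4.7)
The plan is to use the structural stability theorem just proved to break the homotopy $h:[0,1]\to V$ with $h(0)=X$, $h(1)=Y$ into finitely many short steps, each realizing an orbit equivalence close to the identity, and then to assemble these local equivalences into a single ambient isotopy of $M$.

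First I would invoke structural stability pointwise: each $h(t)$ admits an open neighborhood $U(h(t))\subset V$ inside which every vector field is orbit-equivalent to $h(t)$ via a homeomorphism $\varphi_{t,\cdot}:M\to M$, with $\varphi_{t,h(t)}=\mathrm{id}$, that can be taken to depend continuously on the vector field and to be $C^0$-close to $\mathrm{id}$ for arguments close to $h(t)$. Compactness of $[0,1]$ then yields a partition $0=t_0<t_1<\cdots<t_N=1$ with $h([t_{i-1},t_i])\subset U(h(t_{i-1}))$ for each $i$, and the paths $\psi_i(s):=\varphi_{t_{i-1},h(s)}$, $s\in[t_{i-1},t_i]$, are continuous families of orbit equivalences starting at the identity.

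Next, since each $\psi_i(s)$ is a self-homeomorphism of the stratified set $M$ that is close to the identity, it is ambient isotopic to the identity through an isotopy constructed cell by cell on the natural CW-structure of Section 1: fix the null-point, straighten the map on each 1-cell by a small flow along the cell, and extend cell by cell into the 2-cells using an Alexander-type radial interpolation; the local normal form $\{x\,f(x,y),\,y\,g(x,y)\}$ from the preceding lemma ensures this extension can be made compatible with tangency to every stratum. Setting $H_s:=\psi_i(s)\circ\psi_{i-1}(t_{i-1})\circ\cdots\circ\psi_1(t_1)$ for $s\in[t_{i-1},t_i]$, with an obvious linear reparametrization across the joining times, produces a continuous $H:M\times[0,1]\to M$ with $H_0=\mathrm{id}$; $H_1$ is then a composition of orbit-preserving homeomorphisms and hence a topological equivalence from $X$ to $Y$.

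The main obstacle is the continuous dependence of the conjugating homeomorphism on the vector field inside a structural-stability neighborhood. For closed surfaces this is the classical Peixoto construction (conjugate at each hyperbolic singular point by Hartman--Grobman, then transport along trajectories across the regions cut out by separatrices in each 2-cell), but on the stratified Boy's and Girl's surfaces one must verify that these choices respect every stratum; this works because the fields are tangent to each stratum, so the transport along trajectories preserves the cellular decomposition and varies continuously with the field. The projective Morse--Smale case reduces to the Morse--Smale case by lifting the homotopy to the projective plane covering the immersed surface, so no separate argument is needed there.
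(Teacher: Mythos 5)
Your argument follows the same route as the paper's proof: use structural stability to obtain, near each parameter value, a continuously varying family of topological equivalences starting at the identity, then use compactness of $[0,1]$ to chain finitely many such families into a single continuous deformation $H$ with $H_0=\mathrm{id}$ and $H_1$ the desired equivalence. Your additional discussion of how the conjugating homeomorphisms are built (Hartman--Grobman at singularities, transport along trajectories, respecting the strata) fills in details the paper leaves implicit, but it is the same proof in substance.
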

\begin{proof}
Necessity. Using structure stability of Morse-Smale vector filed for any $t \in [0,1]$ there exist a neighborhood $U\subset [0,1]$ and topological equivalences $g_s:M \to M$  of $h(s)$ to $h(t)$, $s\in U$ and $g_s$ continuously depend of $s$. 
Due to the compactness of the segment, it is possible to cover it with such a finite number of intervals where this property is satisfied. This allows us to construct a continuous family of homotopy equivalences $g_t$ from $h(0)$ to $h(1)$. Then the desired isotopy is given by the formula $H(x,t)=g_t(x)$.  

\end{proof}
 
\textbf{Remark} The last lemma holds for a set of vector fields on the Boy's or Girl's surface with one fixed point and a given number of separatrices. For the proof, it suffices to note that the beginning and end of the sepratrix is a fixed point and it does not change under homotopy, and hence the sepratrix diagrams is homeomorphic and the vector fields are topologically equivalent. 

\begin{theorem}
Let $V$ is one of the set of vector fields (flows), which we considered above, $n$ is the total number of topological structures from $V$ and $n_s$ is the number of symmetric structures from $V$, then the number of non-homotopic structures  is given by the formula
$$m=2 \times n - n_s$$
on the Girl's surface and by formula 
$$m=3 \times(2 \times n - n_s)$$
for optimal flows on the Boy's surface.
\end{theorem}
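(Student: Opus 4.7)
The plan is to identify homotopy classes of flows in $V$ with orbits of the mapping class group of the stratified surface acting on topological-equivalence classes, then count those orbits by stabilizer type.

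First, I would strengthen Lemma~3 to a characterization: two flows $X,Y\in V$ are homotopic in $V$ if and only if there is a stratification-preserving self-homeomorphism of the surface, isotopic to the identity, that conjugates $X$ to $Y$. The forward direction is exactly Lemma~3. For the converse, given such an isotopy $H_t$ with $H_0=\mathrm{id}$ and $H_1^{\ast}X=Y$, the path $t\mapsto H_t^{\ast}X$ is a continuous deformation in the space of vector fields; each $H_t^{\ast}X$ is topologically conjugate to $X$, hence Morse--Smale with the same separatrix diagram, and therefore stays in $V$.

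Second, let $\Gamma$ denote the quotient of stratification-preserving self-homeomorphisms by those isotopic to the identity. I would read $\Gamma$ off the planar models of Figure~\ref{bg}: for the Girl's surface the only nontrivial mapping class is the reflection in the vertical axis (axial symmetry that was already used in \S2--\S5), so $\Gamma\cong\mathbb{Z}_2$; for the Boy's surface, whose planar model has an obvious three-fold rotational symmetry cyclically permuting $A,B,C$, one obtains $\Gamma\cong S_3$ of order $6$. Uniqueness of the planar model (Lemma~1) together with the fact that any self-homeomorphism must permute the three 1-cells and respect the cyclic order at the triple point should show that these are all mapping classes. Topological equivalence classes are then precisely $\Gamma$-orbits of homotopy classes, and the number of homotopy classes in a single topological class equals $|\Gamma|/|\mathrm{Stab}|$.

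Finally, I would do the orbit count. On the Girl's surface the stabilizer is either trivial or all of $\mathbb{Z}_2$: the latter by definition corresponds to the $n_s$ symmetric classes (orbit size $1$), the former to the remaining $n-n_s$ classes (orbit size $2$), yielding $m=n_s+2(n-n_s)=2n-n_s$. On the Boy's surface I would first rule out every nontrivial rotational stabilizer, since a flow fixed by the $3$-cycle would have to carry identical incoming/outgoing data on each of $A,B,C$, which is inconsistent with the enumeration of optimal flow types in~\cite{Dibeo-prish}. So the stabilizer is either trivial (orbit size $6$) or the reflection subgroup (orbit size $3$), and $m=3n_s+6(n-n_s)=3(2n-n_s)$. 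The main obstacle I expect is precisely this rotational-symmetry exclusion on the Boy's surface, which requires checking each of the three classes of optimal flows (one-point flows, Morse--Smale, projective Morse--Smale) against the $\mathbb{Z}_3$ action; the identification of $\Gamma$ itself should be routine from the planar models.
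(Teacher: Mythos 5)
Your proposal is correct and follows essentially the same route as the paper: the paper likewise reduces the count to the action of the stratification-preserving mapping class group (reflection $\mathbb{Z}_2$ on the Girl's surface, the order-$6$ group on the Boy's surface) on topological equivalence classes, arguing that homeomorphisms isotopic to the identity yield homotopic flows and then doing your orbit count informally as ``multiply by $2$, subtract the doubly-counted symmetric flows, multiply by $3$ since no optimal flow is $\mathbb{Z}_3$-invariant.'' Your orbit--stabilizer packaging is just a tidier statement of the same computation, and both arguments leave the $\mathbb{Z}_3$-invariance exclusion on the Boy's surface as a fact to be read off the enumeration of optimal flows.
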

\begin{proof}
Recall that a continuous deformation of an arbitrary orientation-preserving mapping $f$ of the segment $[0,1]$ to the identity mapping can be given by the formula $H(x,t)=tx+(1-t)f(x)$.
For a 2-disc, also every orientation-preserving homeomorphism can be continuously deformed to the identity. In addition, this deformation can be made so that the trajectories are displayed in the trajectory and then along each trajectory so as to obtain a given topological equivalence. If the diagram is not symmetrical, then symmetry about the vertical axis will result in a non-homotopic flow. Therefore, the total number of threads must be multiplied by two. In this case, we counted the symmetric flows twice. Because of this, the number of symmetric threads must be subtracted from the total number.
Taking into account that there are no $Z_3$ invariant flows among the optimal flows on the Boy's surface, then the total number of flow must be multiplied be three.
\end{proof}

 Using the formulas in the last theorem we calculate the number of  non-homotopic structures. The resulting number of structures are given in the table 6.1. 
\begin{table}[ht]
	\centering
		\begin{tabular} {|c|c|c|c|}
		\hline
surface
& 
1 fixed point
& 
Morse-Smale
& 
projective 

\\ 
\hline

Boy's 
 &
18/108
 &
342/2004
 &
80/438

 \\
\hline

Girl's 
 &
3/6
 &
534/1058
 &
118/230

\\
 \hline		

		\end{tabular}
	\caption{Number of optimal flows structures up to homeomorphism/
homotopy}
	\label{tab:NF}
\end{table}







\end{document}